 \theoremstyle{plain}
\newtheorem{claim}{\sc Claim}[section]
\newtheorem{lemma}[claim]{\sc lemma}
\newtheorem{proposition}[claim]{\sc Proposition}
\newtheorem{theorem}[claim]{\sc Theorem}
\newtheorem{conjecture}[claim]{\sc Conjecture}
\newtheorem{definition}[claim]{\sc Definition}
\newtheorem{example}[claim]{\sc Example
}
\begin{document}
\title{Polite actions of non-compact Lie groups}
\author{Larry Bates and J\k{e}drzej \'{S}niatycki}

\begin{abstract}
\noindent Based mainly on examples of interest in mechanics, we define the
notion of a polite group action. One may view this as not only trying to
give a more general notion than properness of a group action, but also to
more fully understand the role of invariant functions in describing just
about everything of interest in reduction.

We show that a polite action of a symmetry group of a dynamical system admits
reduction and reconstruction.
\end{abstract}

\maketitle

Dirac's seminal 1950 paper (\cite{dirac50}) showed how to construct a
reduced bracket on a Hamiltonian system with constraints, but did not
focus on constraints generated by the action of a symmetry group. The
first significant theory of reduction of a Hamiltonian system with
symmetry was given by Meyer in 1973 (\cite{meyer}), and this was
followed by work of Marsden and Weinstein a year later
(\cite{marsden-weinstein}.) Since then there has been a veritable
flood of papers endeavouring to understand reduction and various forms
of singular behaviour. For example, many of these works have studied
what happens when the action of the symmetry group is not free and
quotient spaces are not manifolds. It is probably fair to say that a
reasonably complete reduction theory now exists in the case that the
group action is proper (see, for example, \cite{bates-lerman},
\cite{cushman-bates}, \cite{ortega-ratiu},\cite{sjamaar-lerman},
\cite{sniatycki13}.) Here we make the case that since there are
interesting, important examples in mechanics where the symmetry group
does not act properly, a less restrictive notion of group action
warrants consideration.

This paper defines the notion of a \textit{polite} action, and gives
some examples. In addition, it proves that a polite action of the
symmetry group of a dynamical system admits reduction and
reconstruction.  This means that the dynamical vector field projects
to a vector field on a reduced space, and that the original dynamics
can be recovered from the dynamics on the reduced space.  This is all
done in the context of vector fields and differential equations on
manifolds.

Since the possibility exists that our notion of a polite action is not
the last word on group actions in mechanics, we hope that, in the
spirit of this commemorative volume, others will provide even better
solutions to the problem of `what's next'.

\section{Motivating examples}

\noindent The following examples motivate why one needs to deal with
problems where the group action is not proper, so that strictly speaking the
usual reduction theories do not apply.

\begin{enumerate}
\item The one-dimensional harmonic oscillator. Here the Hamiltonian is $h(p,q) = \frac12 p^2 +\frac12 q^2$ on the phase space $P=T^*\mathbb{R}$. All
solutions of Hamilton's equations are periodic with period $2\pi$. The
Hamiltonian flow $\phi_t$ is 
\begin{equation*}
\phi_t 
\begin{pmatrix}
q_0 \\ 
p_0
\end{pmatrix}
= 
\begin{pmatrix}
\cos t & \sin t \\ 
-\sin t & \cos t
\end{pmatrix}
\begin{pmatrix}
q_0 \\ 
p_0
\end{pmatrix}
.
\end{equation*}
The action of $\mathbb{R}$ on $P$ is not proper but is indistinguishable
from the free proper action of the compact group $\mathbb{R}/2\pi\mathbb{Z}$.

\item The stiff spring. The Hamiltonian is, for $\epsilon >0$, 
\begin{equation*}
h(q,p) = \frac12 p^2 +\frac12 q^2 + \frac{\epsilon}{4} q^4.
\end{equation*}
Hamilton's equations yield Duffing's equation $q^{\prime\prime} + q +
\epsilon q^3=0$. This implies that the solution may be written in terms of
the Jacobi elliptic function $\mathrm{cn}$ as
\begin{equation*}
q(t) = \mathop{\mathrm{cn}}\nolimits\left( \sqrt{1+\epsilon}\, t; \sqrt{
\frac{\epsilon}{2(1+\epsilon)}}\right).
\end{equation*}
Here the parameters are chosen so that $q(t)$ solves the initial value problem 
\begin{equation*}
q^{\prime\prime} + q +\epsilon q^3 = 0, \qquad q(0)=1,\quad q^{\prime}(0)=0,
\end{equation*}
for $\epsilon >0$. It follows that the period $\tau$ is 
\begin{align*}
\tau & = \frac{4}{\sqrt{1+\epsilon}} \, K\left(\sqrt{\frac{\epsilon}{2(1+\epsilon)}}\right), \\
& = 2\pi\left(1 - \frac{3}{8}\epsilon + \frac{57}{256}\epsilon^2 + \cdots
\right).
\end{align*}
where $K(k)$ is the complete elliptic integral of the first kind. It is now
easy to solve for other initial conditions to find the period as a function
of the energy $h$ and $\epsilon$. The Hamiltonian flow $\phi_t$, which is an
action of $\mathbb{R}$ on $P=T^*{\mathbb{R}}$ is still periodic, but not
proper. In this case there is no \textit{fixed} subgroup $G$ of $\mathbb{R}$
with the flow $\phi_t$ being a proper $\mathbb{R}/G$ action (although we can
do this individually for each orbit.) However, it is common practice in
mechanics to rescale the Hamiltonian vector field $X_h$ by the period $\tau$
to produce a new vector field $Y=\tau X_h$, all of whose integral curves are
periodic of period 1. It is a theorem that the resulting vector field $Y $
is still a Hamiltonian vector field, and we produce a new variable called
the action (see, for example \cite{bates-sniatycki92a}.) In this way a free
proper action $\psi_t$ of the compact group $\mathop{\! \, \rm SO }%
\nolimits(2)$ is associated to the original nonproper action $\phi_t$ by
setting $\psi_t:= \phi_{\tau t}$.

\item The champagne bottle. The Hamiltonian in this case is 
\begin{equation*}
h= \frac12(p_1^2+p_2^2) +(q_1^2+q_2^2)^2 - (q_1^2+q_2^2)
\end{equation*}
on the phase space $P=T^*\mathbb{R}^2$. This is a completely integrable
system because of the rotational invariance. The Hamiltonian $h$, together
with the angular momentum $j$ gives the construction of action variables $
(I_1,I_2)$ that generate a torus action whose orbits contain the original
quasiperiodic trajectories of the Hamiltonian. In this way, a proper group
action is associated to the non-proper Hamiltonian action of ${\mathbb{R}}^2$
associated to the flow of the commuting Hamiltonian vector fields of the
energy and the angular momentum (see \cite{bates-sniatycki92} for more
details.) However, what is interesting in this case is that the construction
of the actions is only local because of the presence of an obstruction
called monodromy preventing the torus group action being globally
well-defined (see \cite{bates91}.)

\item A nonabelian example. We construct an oriented $S^3$ bundle over ${\mathbb{R}}^3\backslash \{0\}$. The fiber $S^3$ is diffeomorphic to the
group $\mathop{\! \, \rm Spin }\nolimits(3)$, but the bundle is not a
principal bundle. In a sense, we may view this example as a simply-connected
version of the previous example.

To start, consider the two copies of the trivial bundle $D^2 \times S^3$,
which we think of as local trivializations of our bundle over the upper and
lower hemispheres of the sphere $S^2$. Viewing $S^3$ as the unit sphere in $
\mathbb{R}^4$, we consider the gluing map from one hemisphere to another as
a map from the equator into Diff$^+$($S^3$). By a theorem of Hatcher \cite{hatcher}, this diffeomorphism group retracts onto the orthogonal group $\mathop{\! \, \rm SO }\nolimits(4)$. The orthogonal group is diffeomorphic
to the product $\mathop{\! \, \rm SO }\nolimits(3)\times%
\mathop{\! \, \rm
Spin }\nolimits(3)$, and has fundamental group $\mathbb{Z}_2$. The
transition map from one hemisphere to the other is given by the map 
\begin{equation*}
S^1 \longrightarrow \mathop{\! \, \rm SO }\nolimits(4):\phi \longrightarrow 
\begin{pmatrix}
\cos \phi & -\sin \phi & 0 & 0 \\ 
\sin \phi & \cos \phi & 0 & 0 \\ 
0 & 0 & 1 & 0 \\ 
0 & 0 & 0 & 1
\end{pmatrix}
.
\end{equation*}
This map is a generator of the fundamental group of $\mathop{\! \, \rm SO }
\nolimits(4)$ because the matrix represented by the upper left $2\times2$
block is a generator of the fundamental group of $\mathop{\! \, \rm SO }
\nolimits(2)$, and we have the natural inclusions 
\begin{equation*}
\mathop{\! \, \rm SO }\nolimits(2) \hookrightarrow \mathop{\! \, \rm SO }
\nolimits(3) \hookrightarrow \mathop{\! \, \rm SO }\nolimits(4)
\end{equation*}
and thus a surjection in homotopy $\pi_1(\mathop{\! \, \rm SO }\nolimits(2))
\longrightarrow \pi_1(\mathop{\! \, \rm SO }\nolimits(4))$. This implies
that the bundle is not a trivial bundle.

Observe that the south pole $(0,0,0,1)$ on the sphere $S^3$ is fixed by the
transition map, and this implies that the map $S^2 \longrightarrow$ `south
pole' is a global section of the bundle. This fact, together with the
nontriviality of the bundle implies that the bundle is not a principal $
\mathop{\! \, \rm Spin }\nolimits(3)$ bundle, as any principal bundle with a
global section must be globally trivial.

Reviewing this example from the point of view of classifying spaces suggests
that many more such examples may be constructed by considering $\mathop{\!
\, \rm Spin }\nolimits(3)$ bundles over the four-sphere $S^4$.

The bundle constructed here may be given a symplectic structure by embedding
the sphere $S^2$ into $\mathbb{R}^3\backslash 0$ in the usual way. In more
detail, let $S^2$ be $x_1^2+x_2^2+x_3^2=1$, and $\psi_1,\psi_2,\psi_3$ be
the usual left-invariant one-forms on $\mathop{\!
\, \rm Spin }\nolimits(3)$. Then the form 
\begin{equation*}
\omega = \psi_1\wedge \psi_2 + d(z(x_3)\psi_3) + dx_1\wedge dx_2
\end{equation*}
is a symplectic form on our bundle where $z(x_3)$ is a function that
satisfies 1) $z^{\prime}(x_3) >0$ for all $x_3$, and 2) $|z(x_3)| <1$ for
all $x_3$. For example, we may take $z(x)= x/\sqrt{1+x^2}$.

\item Consider the Hamiltonian system given by the motion of the free
particle in space (you can take any dimension $n\geq2$ for space.) The
Euclidean group $\mathop{\! \, \rm SE }\nolimits(n)$ acts in a Hamiltonian
way on the phase space $T^*{\mathbb{R}}^n$ and preserves the level set $h^{-1}(1/2)$, which are the straight lines parametrized by arclength. We are
of course taking the Hamiltonian to be $h=|p|^2/2$. The components of the
momentum map for the Euclidean group are the linear and angular momentum,
and as they commute with the Hamiltonian, they pass to an action on the
quotient space $\bar{P}:=h^{-1}(1/2)/\sim $, where the $\sim$ represents the
quotient by the Hamiltonian flow $\phi_t(q,p)=(q+tp,p)$. The quotient
manifold $\bar{P}$, which is the space of oriented lines in ${\mathbb{R}}^n$
, is naturally endowed with a symplectic structure, as follows from the
reduction theorem. Furthermore, the action of the Euclidean group on the
quotient $\bar{P}$ is Hamiltonian. This action is not fixed point free and
is not proper, as the stability subgroup of a point in the quotient contains
the subgroup which corresponds to the translations along the line that it
represents. More precisely, the space of lines is the homogeneous space $\mathop{\! \, \rm SE }\nolimits(n)/(\mathop{\! \, \rm SO }\nolimits(n)
\times {\mathbb{R}}) \sim T^*S^n$. This construction is used when studying
the Radon transform, as it involves integration along lines.
\end{enumerate}

\section{Polite actions}

Consider a Hamiltonian system $(P,\omega ,h)$ invariant under the action $\phi$ of a connected Lie group $G$. Given a closed subroup $H$ of $G$,
define 
\begin{equation*}
P_{H}:=\{p\in P\mid G_{p}=H\}.
\end{equation*}
Denote by $N^{H}$ the normalizer of $H$ in $G$; that is 
\begin{equation*}
N^{H}=\{n\in G\mid n^{-1}hn\in H\text{ for all }h\in H\}.
\end{equation*}
The normalizer is a closed subgroup of $G$.

\begin{lemma}
The action of $N^{H}$ on $P$ preserves $P_{H}$.
\end{lemma}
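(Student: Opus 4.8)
The plan is to reduce the statement to the standard transformation law for isotropy groups under a group action, namely the identity $G_{n\cdot p}=n\,G_{p}\,n^{-1}$ valid for every $n\in G$ and every $p\in P$. First I would record this identity: for $g\in G$ one has $g\cdot(n\cdot p)=n\cdot p$ if and only if $(n^{-1}gn)\cdot p=p$, i.e. $n^{-1}gn\in G_{p}$, which says exactly that $g\in n\,G_{p}\,n^{-1}$. This is purely formal and uses nothing beyond the fact that $\phi$ is a left action; in particular no properness, freeness or compactness enters.

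Next, take $p\in P_{H}$, so that $G_{p}=H$, and take $n\in N^{H}$. By the identity above, $G_{n\cdot p}=n\,H\,n^{-1}$, so it remains only to check that $n\,H\,n^{-1}=H$. The defining condition gives the one-sided containment $n^{-1}Hn\subseteq H$; since $N^{H}$ is a (closed) subgroup of $G$, we also have $n^{-1}\in N^{H}$, hence $nHn^{-1}\subseteq H$ as well. Conjugating this last inclusion by $n^{-1}$ yields $H\subseteq n^{-1}Hn$, and combining the two containments gives $n^{-1}Hn=H$, equivalently $nHn^{-1}=H$. Therefore $G_{n\cdot p}=H$, i.e. $n\cdot p\in P_{H}$, which is precisely the claim.

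I do not expect a genuine obstacle here: the entire content is the isotropy transformation law, which is completely standard, and the only mildly delicate point — promoting the one-sided inclusion in the definition of $N^{H}$ to the equality $nHn^{-1}=H$ — is taken care of by the already-noted fact that $N^{H}$ is a group. If one wished to avoid even invoking that, one could instead restrict attention to the subset of $n\in N^{H}$ for which $n^{-1}Hn=H$; but since the excerpt asserts $N^{H}$ is a closed subgroup, this refinement is unnecessary.
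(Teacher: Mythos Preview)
Your argument is correct and follows essentially the same route as the paper: compute $G_{n\cdot p}$ via the conjugation identity $G_{n\cdot p}=nG_{p}n^{-1}=nHn^{-1}$ and then verify that this equals $H$ using that $n\in N^{H}$. If anything, you are more explicit than the paper about why the one-sided condition $n^{-1}Hn\subseteq H$ in the definition of $N^{H}$ actually yields the equality $nHn^{-1}=H$, invoking closure of $N^{H}$ under inverses; the paper's proof asserts this step more tersely.
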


\begin{proof}
For $p\in P_{H}$, and $n\in N^{H}$, the isotropy group $G_{np}$ of $np$ is
given by 
\begin{eqnarray*}
G_{np} &=&\{g\in G\mid gnp=np\} \\
&=&\{g\in G\mid n^{-1}gnp=p\} \\
&=&\{g\in G\mid n^{-1}gn\in H\}.
\end{eqnarray*}
In other words, $g\in G_{np}$ if and only if $h=n^{-1}gn\in H$. Therefore, $g=nhn^{-1}\in H$, and $G_{np}=H.$ Hence, $np\in P_{H}$. Thus, the action of $N^{H}$ on $P$ preserves $P_{H}$.
\end{proof}

Since the action of $N^{H}$ on $P$ preserves $P_{H}$, it induces an action
of $N^{H}$ on $P_{H}.$ Let $G_{H}=N^{H}/H$. Since $H$ is closed in $G$, it
is closed in $N^{H}$, and $G_{H}$ is a Lie group. Moreover, there is an
action 
\begin{equation*}
G_{H}\times P_{H}\rightarrow P_{H}:([n],p)=np,
\end{equation*}
where $[n]$ is the equivalence class of $n$ in $G_{H}=N^{H}/H$.

\vspace{11pt}

Warning: The group $G_H$ is \textit{not} a subgroup of the group $G$.

\begin{proposition}
\label{0}The action of $G_{H}$ on $P_{H}$ is free.
\end{proposition}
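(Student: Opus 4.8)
The plan is to unwind the definitions: an action is free precisely when the only group element that fixes some point is the identity, so I would fix an arbitrary $p\in P_{H}$ and an arbitrary class $[n]\in G_{H}=N^{H}/H$ with $[n]\cdot p=p$, and show that $[n]$ is the identity of $G_{H}$, i.e.\ that $n\in H$. Concretely, choose a representative $n\in N^{H}$ of the class $[n]$; well-definedness of the action $G_{H}\times P_{H}\to P_{H}$ (already established above, using that $H$ fixes every point of $P_{H}$) guarantees that the equation $[n]\cdot p=p$ does not depend on the representative and is equivalent to $np=p$ in the original $G$-action.

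The key step is then immediate from the defining property of $P_{H}$. The condition $np=p$ says exactly that $n$ belongs to the isotropy group $G_{p}$. But $p\in P_{H}$ means, by definition, that $G_{p}=H$. Hence $n\in H$, so $[n]=[e]$ in $G_{H}=N^{H}/H$. Since $p$ and $[n]$ were arbitrary, every isotropy group of the $G_{H}$-action on $P_{H}$ is trivial, which is precisely the statement that the action is free.

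I expect no serious obstacle here: the content is really just the observation that $P_{H}$ is built so that all its points have isotropy group exactly $H$, and quotienting $N^{H}$ by $H$ exactly kills the part of the group that acts trivially on $P_{H}$. The only point requiring a little care — and it has already been dealt with in the discussion preceding the proposition — is that the $N^{H}$-action on $P_{H}$ genuinely descends to a $G_{H}$-action, which is what lets us pass freely between $[n]\cdot p=p$ and $np=p$. It may also be worth remarking explicitly that this is why one must take the quotient by $H$ (rather than, say, working with $N^{H}$ itself or with a subgroup of $G$): $N^{H}$ itself does not act freely on $P_{H}$ in general, since $H\subseteq N^{H}$ fixes $P_{H}$ pointwise.
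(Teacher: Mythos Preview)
Your proposal is correct and follows exactly the same line as the paper's proof: pick a representative $n\in N^{H}$ of a class fixing $p\in P_{H}$, observe that $np=p$ forces $n\in G_{p}=H$, and conclude that $[n]$ is the identity in $G_{H}$. The paper's version is simply terser, omitting your remarks about well-definedness and the role of the quotient by $H$.
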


\begin{proof}
For $g\in N^{H}$, suppose $[g]\in G_{H}$ preserves a point $p\in P_{H}$;
that is $gp=p$. This means that $g\in G_{p}=H$. Therefore, $[g]$ is the
identity in $G_{H}$.
\end{proof}

\begin{definition}
The action of $G$ on $P$ is polite if for each closed subgroup $H$ of $G$,
the set $P_{H}$ is a manifold and the action of $G_{H}$ on $P_{H}$ is proper.
\end{definition}

\section{Examples of polite actions}

It is straightforward to check that the group action in all of the following
examples is polite.

\begin{example}
The actions in the motivating examples 1,2,3,5 are all polite.
\end{example}

\begin{example}
Every action of a compact group is polite because it is proper.
\end{example}

\begin{example}
The ${\mathbb{R}}$ action generated by the flow of the vector field $X= \sin
x\, \partial_x + \cos x \, \partial_y $ on the plane is free but not polite.
\end{example}

\begin{example}
The coadjoint action of a compact connected Lie group is polite because this
is just the action of the (finite) Weyl group $N^H/H$ on a coadjoint orbit.
\end{example}

\begin{example}
The co-adjoint action of $\mathop{\! \, \rm SL }\nolimits(2,{\mathbb{R}})$.
\end{example}

There are three co-adjoint orbits of interest, which we can label as
parabolic, hyperbolic and elliptic since the group is semi-simple. The
elliptic and hyperbolic ones correspond to Cartan subalgebras, so the
corresponding stability groups are self-normalizing. The only nontrivial
case is the parabolic one, and here the normalizer is the Borel subgroup
which we may take as the upper triangular matrices. The quotient $N^H/H$ in
this case acts by dilations on the cone (translations along the ruling), and
the action is again seen to be proper.

\begin{example}
A special class of solvable groups of type $\mathfrak{S}$.
\end{example}

Following \textsc{nomizu} \cite{nomizu}, we say that a group $G$ belongs to
the class $\mathfrak{S}$ if the Lie algebra $\mathfrak{g}$ of the Lie group $G$ contains a codimension one commutative ideal $\mathfrak{a}$ and an
element $Y$ with the property that $[Y,X]=X$ for all $X$ in the ideal $\mathfrak{a}$. Let $X_{1},\dots ,X_{n}$ be a basis for $\mathfrak{a}$, and
let $X_{1}^{\ast },\dots ,X_{n}^{\ast },Y^{\ast }$ be the corresponding dual
basis in the dual space $\mathfrak{g}^{\ast }$. Let $(a,b)$ be an element in
the half-space ${\mathbb{R}}^{+}\times {\mathbb{R}}$, and consider the point 
$\mu =aX_{1}^{\ast }+bY^{\ast }\in \mathfrak{g}^{\ast }$. Then the non-zero
infinitesimal generators of the co-adjoint action are generated by 
\begin{equation*}
\mathop{\mathrm{ad}}\nolimits_{X_{1}}^{\ast }|_{\mu }=-a\frac{\partial }{\partial Y^{\ast }},\qquad \mathop{\mathrm{ad}}\nolimits_{Y}^{\ast }|_{\mu
}=-a\frac{\partial }{\partial X_{1}^{\ast }}.
\end{equation*}%
This implies that the co-adjoint orbit through the point $\mu $ is the
two-dimensional open half plane spanned by $Y^{\ast }$ and $\mu $. It
follows that the Lie algebra $\mathfrak{h}=\mathop{\mathrm{span}}
\nolimits\{X_{2},\dots ,X_{n}\}$, and hence that the isotropy group $H\sim {\mathbb{R}}^{n-1}$. Thus the normalizer $N^{H}=G$ and 
\begin{equation*}
N^{H}/H\sim \mathop{\! \, \rm Aff \!}\nolimits^{\,+}(1,{\mathbb{R}})
\end{equation*}
acts freely, transitively and properly on the co-adjoint orbit through $\mu $. Note that the action is just the usual action of the affine group on the
half-plane.

In light of the previous two examples we make the following

\begin{conjecture}
The coadjoint action of a Lie group on the dual of its Lie algebra is polite
for any group in which the coadjoint orbits are locally closed.
\end{conjecture}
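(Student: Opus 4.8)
\medskip
\noindent\textbf{Towards a proof of the conjecture.} Write $P=\mathfrak g^{\ast}$ with the coadjoint action and fix a closed subgroup $H\subseteq G$. Because the coadjoint action is linear, the fixed-point set $V:=\{\mu\in\mathfrak g^{\ast}\mid\mathrm{Ad}^{\ast}_{h}\mu=\mu\ \text{for all}\ h\in H\}$ is a linear subspace of $\mathfrak g^{\ast}$. Plainly $P_{H}\subseteq V$; moreover $N^{H}$ carries $V$ linearly into itself, $H$ acts trivially on $V$, so $G_{H}=N^{H}/H$ acts linearly on $V$, and by Proposition \ref{0} this action is free on $P_{H}$. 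One is thus reduced to analysing a \emph{linear} action of the Lie group $G_{H}$ on the vector space $V$ which is free on the subset $P_{H}$, and the two things to prove are: (a) $P_{H}$ is open in $V$, hence a manifold; and (b) the $G_{H}$-action on $P_{H}$ is proper.

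For (a), the dimension of the coadjoint orbit $\mathcal{O}_{\mu}$ through $\mu$ is the rank of the linear map $X\mapsto\ad^{\ast}_{X}\mu$, hence is lower semicontinuous in $\mu$; since the Lie algebra $\mathfrak h$ of $H$ lies in the Lie algebra $\mathfrak g_{\mu}$ of $G_{\mu}$ for every $\mu\in V$, the set $U:=\{\mu\in V\mid\dim G_{\mu}=\dim H\}$ is open in $V$ and contains $P_{H}$, and on $U$ in fact $\mathfrak g_{\mu}=\mathfrak h$, so that $G_{\mu}^{\circ}=H^{\circ}\subseteq H$. It remains to rule out extra connected components of $G_{\mu}$ near a point $\mu_{0}\in P_{H}$. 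If $\mu_{k}\to\mu_{0}$ in $U$ with $g_{k}\in G_{\mu_{k}}\setminus H$, then on any subsequence along which the $g_{k}$ stay bounded one has $g_{k}\to g_{\infty}\in G_{\mu_{0}}=H$, and multiplying each $g_{k}$ by a suitable element of $H$ (which fixes $\mu_{k}$ because $\mu_{k}\in V$) one may assume $g_{k}\to e$; but then $g_{k}$ eventually lies in the identity component $G_{\mu_{k}}^{\circ}=H^{\circ}\subseteq H$, a contradiction. The point where the hypothesis is genuinely needed is to exclude the remaining alternative that $g_{k}\to\infty$ in $G$: here one uses that $\mathcal{O}_{\mu_{0}}$ is locally closed, hence an embedded submanifold, so that the orbit map $G/H\to\mathfrak g^{\ast}$ is a homeomorphism onto its image, and deduces a contradiction from $g_{k}\mu_{k}=\mu_{k}\to\mu_{0}$ together with $\mu_{k}\in V$. (When $H$ is disconnected one systematically replaces infinitesimal $\mathfrak h$-invariance by $H$-invariance and tracks the finite group $\pi_{0}(H)$; this adds only bookkeeping.)

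For (b), the structural point is that the $G_{H}$-orbit of $\mu\in P_{H}$ is exactly $\mathcal{O}_{\mu}\cap P_{H}$, and this set is \emph{closed} in $P_{H}$: using once more that $\mathcal{O}_{\mu}$ is locally closed, every point of $\overline{\mathcal{O}_{\mu}}\setminus\mathcal{O}_{\mu}$ lies on a coadjoint orbit of strictly smaller dimension, so its isotropy group has dimension exceeding $\dim H$ and it cannot belong to $P_{H}$. Thus $G_{H}$ acts freely on the manifold $P_{H}$ with all orbits closed and embedded, and what remains is to upgrade this to properness. This is the step I expect to be the real obstacle. The direct approach is: given $[n_{k}]\in G_{H}$ with $\mu_{k}\to\mu$ and $n_{k}\mu_{k}\to\nu$ in $P_{H}$, one must show that $[n_{k}]$ subconverges in $G_{H}$; assuming it does not, $[n_{k}]$ escapes every compact subset of $G_{H}$ while $\mu_{k}$ and $n_{k}\mu_{k}$ remain on a common coadjoint orbit $\mathcal{O}_{\mu_{k}}$, and one wants to force $\mathcal{O}_{\mu}$ to accumulate at the point $\nu\in\mathcal{O}_{\mu}$ from outside a compact piece of itself, contradicting the embeddedness of $\mathcal{O}_{\mu}$. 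The difficulty is precisely that the orbits $\mathcal{O}_{\mu_{k}}$ vary with $k$, so one must propagate the local closedness of a single orbit to a whole neighbourhood of nearby orbits, using that $\mu_{k}\to\mu$ and that all of them have isotropy exactly $H$. An alternative and perhaps cleaner route is to recognise the linear $G_{H}$-action on the relevant part of $V$ as a coadjoint-type action of the \emph{lower-dimensional} group $G_{H}$ (this is what happens in the $\SL(2,\R)$ and type-$\mathfrak S$ examples above, where $P_{H}$ is a union of coadjoint orbits of the comparatively simple group $G_{H}$), and then to argue by induction on $\dim G$, provided one can show that $G_{H}$ again has all its coadjoint orbits locally closed. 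Establishing one of these two reductions rigorously is the heart of the matter.
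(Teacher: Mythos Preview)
The statement you are addressing is labelled as a \emph{Conjecture} in the paper and is not proved there; it is offered purely on the strength of the two preceding worked examples (the coadjoint action of $\SL(2,\R)$ and the solvable groups of type $\mathfrak S$). There is therefore no proof in the paper against which to compare your attempt.

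Your sketch is a sensible programme, and you are candid that it is only ``towards'' a proof. Two technical remarks on the content. In your treatment of (a), the step meant to exclude $g_{k}\to\infty$ is not yet an argument: the relation $g_{k}\mu_{k}=\mu_{k}\to\mu_{0}$ is automatic and says nothing about the orbit $\mathcal{O}_{\mu_{0}}$, since $g_{k}$ stabilises $\mu_{k}$, not $\mu_{0}$; to invoke the embeddedness of $\mathcal{O}_{\mu_{0}}$ you would need a sequence \emph{in} $\mathcal{O}_{\mu_{0}}$ that escapes to infinity in $G/H$ yet converges in $\mathfrak g^{\ast}$, and producing one from what you have requires controlling $g_{k}(\mu_{0}-\mu_{k})$, which you cannot do once $g_{k}$ is unbounded. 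In (b) you correctly isolate the crux---propagating local closedness from a single orbit to nearby orbits with the same isotropy---but supply no mechanism; and the inductive route, while attractive, needs care because the linear $G_{H}$-action on $V$ is in general not the coadjoint action of $G_{H}$ (the space $V$ need not be $\mathfrak g_{H}^{\ast}$), so the induction hypothesis does not apply directly. In short, your outline locates the right obstacles but does not yet overcome them, which is entirely consistent with the statement's status as an open conjecture in the paper.
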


\section{Reduction and reconstruction of polite symmetries}

We consider a dynamical system given by a smooth vector field $X$ on a
manifold $P$, called the phase space of the system. Evolutions of our
dynamical system are integral curves $\gamma :I\rightarrow P$ of $X$, where $I$ is an interval in $\mathbb{R}$. 

Let $\Phi :G\times P\rightarrow P$ be an action of a Lie group $G$ on $P$.
We say that $G$ is a symmetry group of our dynamical system if the action $\Phi $ preserves the vector field $X$. The \textit{reduced phase space} is
the space $\bar{P}=P/G$ of $G$-orbits in $P$ endowed with a differential
structure 
\begin{equation*}
C^{\infty }(\bar{P})=\{f:\bar{P}\rightarrow \mathbb{R}\mid \rho ^{\ast }f\in
C^{\infty }(P)^{G}\},
\end{equation*}%
where $\rho :P\rightarrow \bar{P}$ is the orbit map and $C^{\infty }(P)^{G}$
is the ring of $G$-invariant smooth functions on $P.$ It should be noted
that the orbit space $\bar{P}$ has two topologies: the quotient space
topology and the differential space topology. Here, we take the differential
space topology.\footnote{For applications of the theory of differential spaces to reduction of
symmetries see \cite{sniatycki13}. } The reduced dynamical system is the
derivation $\rho _{\ast }X$ of $C^{\infty }(\bar{P})$ defined by 
\begin{equation}
\rho ^{\ast }((\rho _{\ast }X)(f))=X(\rho ^{\ast }f)  \label{push-forward}
\end{equation}%
for every $f\in C^{\infty }(\bar{P}).$  

\begin{proposition}
For every integral curve $\gamma :I\rightarrow P$ of $X$, the curve $\rho
\circ \gamma :I\rightarrow \bar{P}:t\mapsto \rho (\gamma (t))$ satisfies the
equation
\begin{equation}
\frac{d}{dt}f(\rho (\gamma (t)))=((\rho _{\ast }X)(f))(\rho (\gamma (t)))
\label{reduced equation}
\end{equation}
for and each $f\in C^{\infty }(\bar{P})$ and $t\in I$. 
\end{proposition}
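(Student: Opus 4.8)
The plan is to reduce the statement to the chain rule together with the defining property \eqref{push-forward} of $\rho_*X$. Fix $f\in C^\infty(\bar P)$ and $t\in I$. First I would note that $\rho^*f\in C^\infty(P)^G\subset C^\infty(P)$, so that $f\circ\rho\circ\gamma=(\rho^*f)\circ\gamma$ is an honest smooth real-valued function on the interval $I$; its derivative at $t$ is precisely the left-hand side of \eqref{reduced equation}, so the whole statement concerns a differentiable curve in the usual sense, and there is no issue in making sense of it.

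Next I would differentiate using that $\gamma$ is an integral curve of $X$. Since $\dot\gamma(t)=X(\gamma(t))$, the chain rule gives
\[
\frac{d}{dt}\,(\rho^*f)(\gamma(t)) = d(\rho^*f)_{\gamma(t)}\bigl(\dot\gamma(t)\bigr) = \bigl(X(\rho^*f)\bigr)(\gamma(t)),
\]
where $X(\rho^*f)$ is the smooth function on $P$ obtained by letting the vector field $X$ act on the smooth function $\rho^*f$. Then I would invoke \eqref{push-forward}, which says exactly that $X(\rho^*f)=\rho^*\bigl((\rho_*X)(f)\bigr)$ as an identity of smooth functions on $P$; evaluating this identity at the point $\gamma(t)$ yields
\[
\bigl(X(\rho^*f)\bigr)(\gamma(t)) = \bigl(\rho^*((\rho_*X)(f))\bigr)(\gamma(t)) = \bigl((\rho_*X)(f)\bigr)(\rho(\gamma(t))),
\]
which is the right-hand side of \eqref{reduced equation}. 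Concatenating the two displays proves the equation, and since $f\in C^\infty(\bar P)$ and $t\in I$ were arbitrary the proposition follows.

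The computation is routine, so the ``hard part'' is only bookkeeping. The point that deserves care is the interpretation of the reduced-side objects: $f$ lives on the differential space $\bar P$, but $\rho^*f$ is a genuine smooth function on the manifold $P$, so $X(\rho^*f)$ and the pointwise evaluations above are all unambiguous. One should also note that \eqref{push-forward} is being used as an equality of functions on $P$, valid at every point and in particular at $\gamma(t)$; implicit here is that $X(\rho^*f)$ is $G$-invariant (because both $X$ and $\rho^*f$ are), hence of the form $\rho^*(\,\cdot\,)$, and that this descended function is by definition $(\rho_*X)(f)$. No use of politeness is required for this statement: politeness is only needed to ensure that $\bar P$ and $\rho_*X$ enjoy the regularity properties claimed elsewhere, not to carry out this chain-rule argument.
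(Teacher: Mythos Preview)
Your proof is correct and follows exactly the same chain-rule computation as the paper: rewrite $f\circ\rho\circ\gamma$ as $(\rho^*f)\circ\gamma$, differentiate to get $X(\rho^*f)$ at $\gamma(t)$, and then apply the defining relation \eqref{push-forward}. The only difference is that you spell out the bookkeeping (smoothness of $\rho^*f$, $G$-invariance, no need for politeness) more explicitly than the paper's two-line display.
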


\begin{proof}
It follows from equation (\ref{push-forward}) that
\begin{eqnarray*}
\frac{d}{dt}f(\rho (\gamma (t))) &=&\frac{d}{dt}((\rho ^{\ast }f)(\gamma
(t))=(X(\rho ^{\ast }f))(\gamma (t)) \\
&=&\rho ^{\ast }((\rho _{\ast }X)(f))(\gamma (t))=((\rho _{\ast }X)(f))(\rho
(\gamma (t))).
\end{eqnarray*}
\end{proof}

Equation (\ref{reduced equation}) is called the \textit{reduced equation.} A
curve $\rho \circ \gamma :I\rightarrow \bar{P}$ satisfying the reduced
equation gives a reduced evolution of the system. Given a reduced evolution $\bar{\gamma}:I\rightarrow \bar{P}$ of the system, the process of finding
integral curves $\gamma $ of $X$ such that $\rho \circ \gamma =\bar{\gamma}$
is called \textit{reconstruction}. If the action $\Phi $ of $G$ on $P$ is
free and proper, the reduced equation as well as equations involved in
reconstruction are ordinary differential equations on manifolds.

\begin{definition}
An action $\Phi :G\times P\rightarrow P$ that preserves a vector field $X$
on $P$ admits reduction and reconstruction if the reduced equation and
equations involved in reconstruction can be presented as differential
equations on manifolds.
\end{definition}

\begin{theorem}
\label{2}A polite action $\Phi :G\times P\rightarrow P$ that preserves a
vector field $X$ on $P$ admits reduction and reconstruction.
\end{theorem}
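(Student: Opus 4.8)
The plan is to stratify $P$ by isotropy type and apply the free-and-proper reduction theory piecewise. For each conjugacy class $(H)$ of closed subgroups of $G$, consider the set $P_{(H)}$ of points whose isotropy group is conjugate to $H$; this is the union of the sets $gP_Hg^{-1}$, and by politeness each $P_H$ is a manifold with a free proper $G_H$-action. The first step is to observe that, because $X$ is $G$-invariant, its flow preserves isotropy type, so $X$ is tangent to each $P_{(H)}$ and restricts to a vector field $X_H$ on $P_H$. Moreover $X_H$ is $G_H$-invariant: for $[n]\in G_H$ the map $p\mapsto np$ on $P_H$ is the restriction of the $G$-action, which preserves $X$. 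Hence on each piece we are exactly in the free-proper situation, where the reduced phase space $\bar P_H = P_H/G_H$ is a smooth manifold, the reduced vector field $(\rho_H)_*X_H$ is an honest vector field on $\bar P_H$, and both the reduced equation (\ref{reduced equation}) and the reconstruction equations are ordinary differential equations on manifolds.

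**Assembling the pieces.** The second step is to identify $\bar P = P/G$ as a differential space with the disjoint union (as a set) of the manifolds $\bar P_H$, and to check that $C^\infty(\bar P)$ restricts, on each stratum, to $C^\infty(\bar P_H)$. Here one uses that a $G$-orbit in $P_{(H)}$ meets $P_H$ in a single $G_H$-orbit, so the orbit map $\rho$ restricted to $P_H$ factors as $P_H \to \bar P_H \hookrightarrow \bar P$, and a $G$-invariant smooth function on $P$ pulls back from $\bar P$ exactly when its restriction to each $P_H$ descends to a smooth function on $\bar P_H$. The third step is to verify compatibility of the reduced dynamics across strata: the derivation $\rho_*X$ of $C^\infty(\bar P)$ defined by (\ref{push-forward}) restricts on $\bar P_H$ to $(\rho_H)_*X_H$, simply because restriction commutes with $\rho^*$ and with $X$. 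Consequently an integral curve of $\rho_*X$ through a point of $\bar P_H$ stays in $\bar P_H$ (isotropy type is preserved along evolutions) and solves the ODE on the manifold $\bar P_H$; this is precisely what it means for the reduced equation to be ``presentable as a differential equation on manifolds.''

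**Reconstruction.** For reconstruction, given a reduced evolution $\bar\gamma: I \to \bar P$, it lies in a single stratum $\bar P_H$, so it is an integral curve of the manifold vector field $(\rho_H)_*X_H$. Lifting it to $P_H$ is the standard free-proper reconstruction problem: choose a local section of the principal $G_H$-bundle $P_H \to \bar P_H$ to get a curve $\delta$ in $P_H$ projecting to $\bar\gamma$, then solve the ``reconstruction equation'' for a curve $g(t)$ in $G_H$ with $g(0)=e$ such that $\gamma(t)=g(t)\cdot\delta(t)$ is an integral curve of $X_H$; this is a (time-dependent) ODE on the Lie group $G_H$. Finally, any $N^H$-lift of $g(t)$ together with the original $G$-action produces the integral curves $\gamma$ of $X$ in $P$ with $\rho\circ\gamma = \bar\gamma$, up to the overall $G$-action. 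Each equation in this chain lives on a manifold, so the action admits reduction and reconstruction.

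**Main obstacle.** The routine verifications are the stratum-by-stratum bookkeeping; the real point requiring care is the differential-space statement that $C^\infty(\bar P)$ is correctly recovered from the strata $C^\infty(\bar P_H)$ — i.e. that no smoothness is lost or spuriously gained at the interfaces between strata — and, relatedly, that integral curves of $\rho_*X$ genuinely cannot jump strata. The first is handled by the defining property of the differential structure on $\bar P$ (invariant functions on $P$ restrict to invariant functions on each $P_H$, and politeness makes each quotient smooth); the second follows because the flow of $X$ is by $G$-equivariant diffeomorphisms, hence preserves isotropy groups exactly, so $\gamma(t)\in P_{(H)}$ for one fixed $(H)$ and all $t\in I$. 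Once these two facts are in hand, the theorem reduces to the classical free-and-proper case applied on each $P_H$.
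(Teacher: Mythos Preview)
Your overall strategy coincides with the paper's: stratify by $P_H$, observe that $X$ restricts to a $G_H$-invariant vector field on the manifold $P_H$, and invoke free--proper reduction and reconstruction there, since politeness makes the $G_H$-action free and proper and $P_H/G_H$ a manifold.

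However, your ``assembling the pieces'' step contains a claim that the paper explicitly refutes. You assert that $C^\infty(\bar P)$ restricts on each stratum to $C^\infty(\bar P_H)$, flag this as the main obstacle, and then declare it handled. In fact the paper distinguishes two differential structures on $\bar P_H$: the structure $C_2^\infty(\bar P_H)$ generated by restrictions of functions from $C^\infty(\bar P)$, and the structure $C_1^\infty(\bar P_H)=\{h:\rho_H^\ast h\in C^\infty(P_H)\}$ coming from the quotient $P_H/G_H$. Only the inclusion $C_2^\infty(\bar P_H)\subseteq C_1^\infty(\bar P_H)$ holds in general; for an improper $G$-action the inclusion can be strict, and the paper exhibits a concrete example on the $2$-torus where a $G_H$-invariant smooth function on $P_H$ does not extend to a $G$-invariant smooth function on $P$. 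So your statement that ``no smoothness is lost or spuriously gained at the interfaces'' is not correct: smooth functions on $\bar P_H$ need not come from $\bar P$.

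The paper's remedy is simply to work with the larger structure $C_1^\infty(\bar P_H)$, which it shows is the manifold structure of $P_H/G_H$. That suffices: the reduced equation, which is stated for $f\in C^\infty(\bar P)$ and hence only for $f|_{\bar P_H}\in C_2^\infty(\bar P_H)\subseteq C_1^\infty(\bar P_H)$, is implied by the genuine ODE for $(\rho_H)_\ast X_H$ on the manifold $(\bar P_H, C_1^\infty)$. No global recovery of $C^\infty(\bar P)$ from the strata is needed or asserted. Your argument becomes correct once you drop the surjectivity claim and, as the paper does, pass to $C_1^\infty(\bar P_H)$ rather than trying to match it with restrictions from $\bar P$.
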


We shall prove this theorem by a sequence of propositions.

\begin{proposition}
\label{1 copy(1)}Let $X$ be a vector field on $P$ that is invariant under a
polite action of a Lie group $G$ on $P$. For each closed subgroup $H$ of $G$, the flow of $X$ preserves $P_{H}$.
\end{proposition}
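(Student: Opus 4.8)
The plan is to show that $P_H$ is an invariant set of the flow of $X$ by tracking how the isotropy group varies along an integral curve. First I would recall the elementary fact that $G$-invariance of $X$ forces the isotropy groups to be constant along integral curves in the following sense: if $\gamma$ is an integral curve of $X$ with $\gamma(0) = p$, then for each $g \in G_p$ the curve $t \mapsto \Phi_g(\gamma(t))$ is again an integral curve of $X$ (because $\Phi_g$ preserves $X$), and it agrees with $\gamma$ at $t=0$; by uniqueness of integral curves it coincides with $\gamma$ on its whole domain. Hence $g \in G_{\gamma(t)}$ for all $t$, which gives $G_p \subseteq G_{\gamma(t)}$.

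The reverse inclusion is obtained by running the same argument backwards along the curve, or more simply by noting that the map $t \mapsto \gamma(t)$ can be reparametrised and the flow is reversible, so by symmetry $G_{\gamma(t)} \subseteq G_{\gamma(0)}$ as well. Combining the two inclusions, $G_{\gamma(t)} = G_p$ for every $t$ in the domain of $\gamma$. Therefore, if the starting point $p$ lies in $P_H$, i.e.\ $G_p = H$, then $G_{\gamma(t)} = H$ for all $t$, so $\gamma(t) \in P_H$; this is exactly the statement that the flow of $X$ preserves $P_H$.

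One point that deserves care: the hypothesis that the action is \emph{polite} is what guarantees $P_H$ is a manifold, so that the phrase ``the flow of $X$ preserves $P_H$'' refers to a genuine submanifold rather than an arbitrary subset; but politeness is not actually needed for the invariance itself — the argument above uses only $G$-invariance of $X$ and uniqueness of integral curves. I expect the only real subtlety to be the bookkeeping about domains of definition: the integral curve through $p$ and the integral curve through $\Phi_g(\gamma(t))$ a priori have their own maximal intervals, and one must observe that $\Phi_g$ carries the maximal integral curve through $p$ to the maximal integral curve through $\Phi_g(p) = p$, so the two maximal domains coincide and the comparison above is valid on all of it. With that remark in place the proof is complete.
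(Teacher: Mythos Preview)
Your proof is correct and follows essentially the same route as the paper: both exploit that $G$-invariance of $X$ makes the flow commute with the $G$-action (the paper writes this as $g\,\exp tX\,g^{-1}=\exp tX$, you phrase it via uniqueness of integral curves), so that isotropy is preserved along trajectories. You are in fact more careful than the paper, which only writes out the inclusion $H\subseteq G_{\exp tX(p)}$ and leaves the reverse inclusion implicit; your remark that politeness itself plays no role in the invariance argument is also correct.
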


\begin{proof}
Let $\exp tX$ be the local one-parameter group of local diffeomorphisms of $P
$ generated by $X$, and $H$ be a closed subgroup of $G$. For each $g\in H$
we have 
\begin{equation*}
g\exp tXg^{-1}=\exp tX,
\end{equation*}
because $X$ is $G$-invariant and $H\subseteq G$. Hence, for each $p\in P_{H}$
and $g\in H$, 
\begin{equation*}
g\exp tX(p)=\exp tXgp=\exp tX(p),
\end{equation*}
which implies that $\exp tX(p)\in P_{H}.$
\end{proof}

Let $\bar{P}_{H}=\rho (P_{H})$ and $\rho _{H}:P_{H}\rightarrow \bar{P}_{H}$
be the restriction of $\rho $ to $P_{H}$. The following diagram 
\begin{equation*}
\begin{array}{ccccc}
&  & \iota _{H} &  &  \\ 
& P_{H} & \hookrightarrow  & P &  \\ 
\rho _{H} & \downarrow  &  & \downarrow  & \rho  \\ 
& \bar{P}_{H} & \hookrightarrow  & \bar{P} &  \\ 
&  & \epsilon _{H} &  & 
\end{array}
\end{equation*}
where the horizontal arrows are the inclusion maps, commutes.

The space $\bar{P}_H$ has the differential structure 
\begin{equation*}
C_1^{\infty }(\bar{P}_H)=\{h:\bar{P}_H\rightarrow \mathbb{R\mid \rho }
_H^{\ast}h\in C^{\infty }(P_H)\}
\end{equation*}
and a differential structure $C_2^{\infty }(\bar{P}_H)$ generated by the
restrictions to $\bar{P}_H$ of smooth functions on $\bar{P}$.

\begin{proposition}
The differential structures $C_{2}^{\infty }(\bar{P}_{H})$ and $C_{1}^{\infty }(\bar{P}_{H})$ are related by the inclusion 
\begin{equation*}
C_{2}^{\infty }(\bar{P}_{H})\subseteq C_{1}^{\infty }(\bar{P}_{H}).
\end{equation*}
If the action of $G$ on $P$ is improper, $C_{2}^{\infty }(\bar{P}_{H})$ may
be a proper subset of $C_{1}^{\infty }(\bar{P}_{H}).$
\end{proposition}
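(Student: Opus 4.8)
The plan is to treat the two assertions separately: the inclusion is a formal consequence of the commuting diagram, while the strictness claim needs one explicit improper polite action. For the inclusion, it is enough to check that every generator of $C_2^\infty(\bar P_H)$ — i.e.\ every function $\epsilon_H^*f=f|_{\bar P_H}$ with $f\in C^\infty(\bar P)$ — already lies in $C_1^\infty(\bar P_H)$; since $C_1^\infty(\bar P_H)$ is itself a differential structure (it is the structure induced on $\bar P_H=P_H/G_H$ by the quotient map $\rho_H$) and $C_2^\infty(\bar P_H)$ is by definition the smallest differential structure containing those generators, the inclusion follows. So fix $f\in C^\infty(\bar P)$. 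Commutativity of the diagram gives $\rho_H^*(\epsilon_H^*f)=\iota_H^*(\rho^*f)$. By definition of $C^\infty(\bar P)$ we have $\rho^*f\in C^\infty(P)^G\subseteq C^\infty(P)$, and $\iota_H:P_H\hookrightarrow P$ is a smooth inclusion, so $\iota_H^*(\rho^*f)=(\rho^*f)|_{P_H}\in C^\infty(P_H)$. Hence $\rho_H^*(\epsilon_H^*f)\in C^\infty(P_H)$, that is $\epsilon_H^*f\in C_1^\infty(\bar P_H)$, as needed.

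For strictness I would exhibit a concrete polite but improper action in which one stratum $\bar P_H$ is essentially invisible to the invariant functions on $\bar P$. A convenient candidate is $G=\mathbb{R}$ acting on $P=\mathbb{R}^2$ by $\Phi(t,(x,y))=(x,\,y+tx)$. The isotropy group of $(x,y)$ is trivial when $x\neq 0$ and all of $\mathbb{R}$ when $x=0$, so among the closed subgroups of $\mathbb{R}$ only $H=\{0\}$ and $H=\mathbb{R}$ give a nonempty $P_H$: one has $P_{\{0\}}=\{x\neq 0\}$ with $G_{\{0\}}=\mathbb{R}$ acting properly (if $x$ is bounded away from $0$ while $x,y$ and $y+tx$ stay bounded, then $t$ is bounded), and $P_{\mathbb{R}}=\{x=0\}$ with $G_{\mathbb{R}}=\mathbb{R}/\mathbb{R}$ trivial; the remaining $P_{a\mathbb{Z}}$ are empty. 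Thus $\Phi$ is polite. It is not proper: for $p_n=(1/n,0)$ and $t_n=n$ one has $(\Phi(t_n,p_n),p_n)\to((0,1),(0,0))$ while $t_n\to\infty$. Now $C^\infty(\mathbb{R}^2)^G$ consists exactly of the functions $(x,y)\mapsto\psi(x)$ with $\psi\in C^\infty(\mathbb{R})$, since along each line $x=x_0\neq 0$ the orbit is the whole line and continuity then pins down the value on $x=0$; hence $C^\infty(\bar P)\cong C^\infty(\mathbb{R})$, depending only on the $x$-coordinate. Taking $H=\mathbb{R}$, the map $\rho_{\mathbb{R}}$ identifies $\bar P_{\mathbb{R}}$ with $P_{\mathbb{R}}\cong\mathbb{R}$ (coordinate $y$), so $C_1^\infty(\bar P_{\mathbb{R}})\cong C^\infty(\mathbb{R})$; but every $f\in C^\infty(\bar P)$ restricts to a constant on $\bar P_{\mathbb{R}}$ (it depends only on $x$, which vanishes there), so $C_2^\infty(\bar P_{\mathbb{R}})$ is the differential structure generated by the constants, i.e.\ the constants alone. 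Therefore $C_2^\infty(\bar P_{\mathbb{R}})\subsetneq C_1^\infty(\bar P_{\mathbb{R}})$.

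The main obstacle lies entirely in the strictness half: one must produce an action that is simultaneously improper and polite, and then locate a stratum $P_H$ whose image $\bar P_H$ sits in $\bar P$ so degenerately that the ambient invariant functions fail to separate its points (equivalently, so that the differential-space topology $\bar P_H$ inherits from $\bar P$ is strictly coarser than its manifold topology). Checking that the proposed example really verifies every clause of the definition of politeness — running through all closed subgroups $H$ — is the one place where genuine, if brief, work is required; the inclusion half is a pure diagram chase and carries no difficulty.
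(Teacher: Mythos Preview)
Your proof of the inclusion is correct and essentially identical to the paper's: both arguments pull a function $f\in C^\infty(\bar P)$ back through the commuting square to land in $C^\infty(P_H)$, and then observe that $C_2^\infty(\bar P_H)$, being generated by such restrictions, must sit inside the differential structure $C_1^\infty(\bar P_H)$.

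For the strictness claim you and the paper diverge. The paper uses the $\mathbb{R}$-action on the torus $\mathbb{R}^2/(2\pi\mathbb{Z})^2$ generated by the flow of $X=\sin x\,\partial_x+\cos x\,\partial_y$; there $P_{2\pi\mathbb{Z}}$ is the union of the two circles $x=0$ and $x=\pi$, and because every other orbit has these circles as its $\alpha$- and $\omega$-limit sets, any globally invariant function must take the same value on both components, whereas $C_1^\infty(\bar P_{2\pi\mathbb{Z}})$ allows distinct constants on the two circles. Your shear action $(t,(x,y))\mapsto(x,y+tx)$ on $\mathbb{R}^2$ is a genuinely different and arguably cleaner example: the computation of $C^\infty(P)^G$ is immediate, the verification of politeness is completely elementary (and you carry it out, whereas the paper does not explicitly check politeness for its example), and the strictness is starker since $C_2^\infty(\bar P_{\mathbb{R}})$ collapses all the way down to the constants rather than merely identifying two components. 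The paper's example, on the other hand, ties back to its earlier Example~3.3 and illustrates a subtler mechanism---components of $P_H$ that are dynamically linked through the closure of generic orbits---which is perhaps more representative of how the phenomenon arises in practice.
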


\begin{proof}
If $f\in C^{\infty }(\bar{P})$, then $\epsilon _H^{\ast }f=f_{\mid \bar{P}
_{H}}\in C_{2}(\bar{P}_{H})$. On the other hand, $\rho ^{\ast }f\in
C^{\infty }(P)$ and the restriction of $\rho ^{\ast }f$ to $P_{H}$ is an $N^{H}$-invariant smooth function $(\rho ^{\ast }f)_{\mid P_{H}}=\iota
_{H}^{\ast }\rho ^{\ast }f$ on $P_{H}$. Moreover, $\rho \circ \iota
_{H}=\epsilon _{H}\circ \rho _{H}$ implies that $\iota _{H}^{\ast }\rho
^{\ast }f=\rho _{H}^{\ast }\epsilon _{H}^{\ast }f$. Therefore, $\epsilon
_{H}^{\ast }f\in C_{1}^{\infty }(\bar{P}_{H}).$

Suppose now that $h:\bar{P}_{H}\rightarrow \mathbb{R}$ is such that, for
every $r\in \bar{P}_{H}$, there exists a neighbourhood $U_{r}$ of $r$ in $\bar{P}_{H}$ and a function $f_{r}\in C^{\infty }(\bar{P})$ such that $\epsilon _{H}^{\ast }f_{r\mid U_{r}}=h_{\mid U_{r}}$. By definition of the
differential structure generated by a family of functions, $f_{r}\in
C_{2}^{\infty }(\bar{P}_{H})$. We have shown above that $\epsilon _{H}^{\ast
}f_{r}\in C_{1}^{\infty }(\bar{P}_{H})$. Hence, $f_{r\mid \bar{P}_{H}\cap
U_{r}}=f_{r\mid U_{r}}$, which implies 
\begin{equation*}
C_{2}^{\infty }(\bar{P}_{H})\subseteq C_{1}^{\infty }(\bar{P}_{H}).
\end{equation*}

On the other hand, suppose that $h\in C_{1}^{\infty }(\bar{P}_{H}),$ which
means that $\mathbb{\rho }_{H}^{\ast }h\in C^{\infty }(P_{H})^{H}$. The set 
\begin{equation*}
P_{(H)}=\{gp\in P\mid g\in G,\text{\ }p\in P_{H}\}
\end{equation*}
is the union of the orbits of $G$ through points in $P_{H}$. We can extend
the $H$-invariant function $\mathbb{\rho }_{H}^{\ast }h$ on $P_{H}$ to a $G$-invariant function $k$ on $P_{(H)}.$ If the action of $G$ on $P$ is not
proper, we have no guarantee that a $G$-invariant function $k$ on $P_{(H)}$
extends to a $G$-invariant function on $P$, as may be seen in the following
example. Let $X$ be the planar vector field 
\begin{equation*}
X= \sin x\, \partial_x + \cos x \, \partial_y.
\end{equation*}
Since $X$ has bounded norm in the plane (so has a complete flow) and is
invariant by translations of $2\pi$ in both the $x$ and $y$ directions, $X$
generates an ${\mathbb{R}}$-action on the torus ${\mathbb{R}}^2/(2\pi{\mathbb{Z}}\times 2\pi{\mathbb{Z}})$ This action is not free only on the two
circular orbits through $[(0,0)]$ and $[(\pi,0)]$, and the isotropy group $H 
$ for these orbits in this case is $2\pi{\mathbb{Z}}$. Any function that is
locally constant on each circle need not extend to an invariant function on
the entire torus unless it has the same value on each circle, because the
pair of circles are the alpha and omega limit sets of every other trajectory
on the torus.

Hence, if the action is not proper, $C_{2}^{\infty }(\bar{P}_{H})$ may be a
proper subset of $C_{1}^{\infty }(\bar{P}_{H}).$
\end{proof}

In the following we shall consider $\bar{P}_{H}$ with the differential
structure $C_1^{\infty }(\bar{P}_H)$.

\begin{proposition}
For each closed subgroup $H$ of $G$, $\bar{P}_{H}$ with the differential
structure $C_{1}^{\infty }(\bar{P}_{H})$ is diffeomorphic to $P_{H}/G_{H}$.
\end{proposition}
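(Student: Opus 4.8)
The plan is to exhibit a canonical bijection between $P_H/G_H$ and $\bar P_H$ and then check that it matches the two differential structures. The starting point is the observation that the $G$-orbit through a point of $P_H$ meets $P_H$ in exactly one $G_H$-orbit, so that, set-theoretically, $\bar P_H=\rho(P_H)$ \emph{is} the orbit space $P_H/G_H$. Since politeness guarantees that $P_H$ is a manifold on which $G_H$ acts freely (Proposition \ref{0}) and properly, the quotient $P_H/G_H$ is a smooth manifold and the orbit map $q\colon P_H\to P_H/G_H$ is a surjective submersion; all that then remains is to compare $C_1^\infty(\bar P_H)$ with the manifold structure of $P_H/G_H$ through $q$.

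First I would prove the orbit-coincidence claim. Let $p\in P_H$ and $g\in G$ with $gp\in P_H$. Then $G_{gp}=gG_pg^{-1}=gHg^{-1}$, while $gp\in P_H$ forces $G_{gp}=H$; hence $gHg^{-1}=H$, i.e.\ $g\in N^H$. Conversely, by the lemma asserting that $N^H$ preserves $P_H$, every element of $N^H$ carries a point of $P_H$ to a point of $P_H$ lying in the same $G$-orbit. Thus for $p,p'\in P_H$ we have $\rho_H(p)=\rho_H(p')$ if and only if $p'=np$ for some $n\in N^H$, i.e.\ if and only if $p$ and $p'$ lie in the same $G_H$-orbit. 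Consequently $\rho_H$ factors as $\rho_H=\Psi\circ q$ for a unique bijection $\Psi\colon P_H/G_H\to\bar P_H$.

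Next I would promote $\Psi$ to a diffeomorphism of differential spaces. By politeness $P_H$ is a manifold and the $G_H$-action on it is proper, and by Proposition \ref{0} it is free; hence $q\colon P_H\to P_H/G_H$ is a principal $G_H$-bundle, in particular a surjective submersion, and the manifold structure of $P_H/G_H$ is characterized by $C^\infty(P_H/G_H)=\{f\colon P_H/G_H\to\mathbb{R}\mid q^\ast f\in C^\infty(P_H)\}$. Now, using $\rho_H=\Psi\circ q$, a function $h\colon\bar P_H\to\mathbb{R}$ lies in $C_1^\infty(\bar P_H)$ if and only if $\rho_H^\ast h=q^\ast(\Psi^\ast h)\in C^\infty(P_H)$, and by the universal property of the surjective submersion $q$ this holds if and only if $\Psi^\ast h\in C^\infty(P_H/G_H)$. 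Therefore $\Psi^\ast$ carries $C_1^\infty(\bar P_H)$ bijectively onto $C^\infty(P_H/G_H)$, so $\Psi$ is the desired diffeomorphism and, incidentally, $\bar P_H$ is a manifold.

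The argument is short, and the one point that needs care is that last equivalence: one must invoke that a function on the base of a surjective submersion is smooth precisely when its pullback is smooth — equivalently, that the $G_H$-invariant smooth functions on $P_H$ are exactly the pullbacks of smooth functions on the quotient manifold. This in turn rests on freeness and properness of the $G_H$-action, the two ingredients supplied by politeness and Proposition \ref{0}, so this is really where the hypothesis is used; everything else is bookkeeping with the orbit maps and the commuting square relating $\rho$, $\rho_H$, $\iota_H$ and $\epsilon_H$. A minor additional check is that $\rho_H^\ast h$, being constant on $G_H$-orbits, is automatically $G_H$-invariant, so that reading functions on $P_H$ as functions on $P_H/G_H$ is unambiguous.
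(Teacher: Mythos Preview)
Your proof is correct and follows essentially the same route as the paper: both arguments identify the fibers of $\rho_H$ with $N^H$-orbits (equivalently $G_H$-orbits) on $P_H$ and then observe that the $N^H$-invariant smooth functions on $P_H$ coincide with the $G_H$-invariant ones, so the two quotient differential structures agree. The only difference is packaging: you make the orbit-coincidence step explicit and invoke politeness inside the proof (via the submersion characterization of $C^\infty(P_H/G_H)$), whereas the paper states the identification purely at the differential-space level and defers the use of politeness to the paragraph immediately following the proposition.
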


\begin{proof}
The differential structure $C_2^{\infty }(\bar{P}_H)$ of $\bar{P}_H$
consists of pushforwards of $N_{H}$-invariant smooth functions on $P_{H}$.
However, a function $f\in C^{\infty }(P_{H})$ is $N_{H}$-invariant if and
only if it is $G_{H}$-invariant. But the differential structure of $P_{H}/G_{H}$ consists of $G_{H}$-invariant functions on $P_{H}$. Hence, the
differential structures of $\bar{P}_H$ and $P_{H}/G_{H}$ coincide.
\end{proof}

By Proposition \ref{1 copy(1)}, for each closed subgroup $H$ of $G$, the
flow $\exp tX$ of the invariant vector field $X$ preserves $P_{H}.$ The
politeness of the action of $G$ on $P$ ensures that $P_{H}$ is a manifold
and that the action of $G_{H}$ on $P_{H}$ is proper. Proposition \ref{0}
ensures that the action of $G_{H}$ on $P_{H}$ is free. Hence, $P_{H}/G_{H}$
is a quotient manifold of $P_{H}$, and $P_{H}$ has the structure of a left
principal $G_{H}$-bundle over $P_{H}/G_{H}.$ This implies that both the
reduction and the reconstruction of the restriction of $X$ to $P_{H}$ is the
same as in the case of a free and proper action. This completes the proof of
Theorem \ref{2}.

\vspace{20pt}

\noindent Larry M. Bates \newline
Department of Mathematics \newline
University of Calgary \newline
Calgary, Alberta \newline
Canada T2N 1N4 \newline
bates@ucalgary.ca

\vspace{20pt}

\noindent J\k{e}drzej \'Sniatycki \newline
Department of Mathematics \newline
University of Calgary \newline
Calgary, Alberta \newline
Canada T2N 1N4 \newline
sniatyck@ucalgary.ca

\end{document}